\documentclass[11pt]{amsart}

\usepackage{paralist, amsmath, amsthm, amssymb, color, graphicx}
\usepackage[margin=1.4in]{geometry}
\usepackage{hyperref}

\newtheorem{thm}{Theorem}

\newtheorem{prop}[thm]{Proposition}
\newtheorem{lemma}[thm]{Lemma}
\newtheorem{cor}[thm]{Corollary} 
\newtheorem{corollary}[thm]{Corollary}

\newtheorem{conj}[thm]{Conjecture}

\newcommand{\al}{\alpha}
\newcommand{\be}{\beta}
\newcommand{\ga}{\gamma}

\newcommand{\eps}{\epsilon}
\newcommand{\si}{\sigma}
\newcommand{\la}{\lambda}

\newcommand{\ov}{\overline}

\newcommand{\dir}{\textrm{dir}}
\newcommand{\spin}{\textrm{spin}}
\newcommand{\Id}{\textrm{Id}}

\newcommand{\ds}{\displaystyle}

\newcommand{\fig}[3]{\begin{figure}[h]\begin{center}\includegraphics[#1]{#2}\end{center}\caption{#3}\label{fig:#2}\end{figure}}

\newcommand{\ZZ}{\mathbf{Z}}

\newcommand{\mC}{\mathcal{C}}
\newcommand{\mF}{\mathcal{F}}

\newcommand{\mT}{\mathcal{T}}

\newcommand{\xx}{\textbf{x}}
\newcommand{\yy}{\textbf{y}}
\newcommand{\qq}{\textbf{q}}
\newcommand{\mm}{\textbf{m}}
\newcommand{\nn}{\textbf{n}}

\newcommand{\tw}{\widetilde{w}}
\newcommand{\hw}{\widehat{w}}
\newcommand{\ow}{\overline{w}}

\title{On the spanning trees of the hypercube and other products of graphs}

\author{Olivier Bernardi}
\thanks{I acknowledge support from NSF grant DMS-1068626, ERC ExploreMaps and ANR A3.}
\date{\today}

\begin{document}
\setcounter{tocdepth}{2}

\begin{abstract}
We give two combinatorial proofs of an elegant product formula for the number of spanning trees of the $n$-dimensional hypercube.
The first proof is based on the assertion that if one chooses a uniformly random rooted spanning tree of the hypercube and orient each edge from parent to child, then the parallel edges of the hypercube get orientations which are independent of one another. This independence property actually holds in a more general context and has intriguing consequences.
The second proof uses some ``killing involutions'' in order to identify the factors in the product formula. It leads to an enumerative formula for the spanning trees of the $n$-dimensional hypercube augmented with diagonals edges, counted according to the number of edges of each type. 
We also discuss more general formulas, obtained using a matrix-tree approach, for the number of spanning trees of the Cartesian product of complete graphs.
\end{abstract}


\maketitle

\section{Introduction}
Let $C_n$ be the hypercube in dimension $n$. The vertex set of $C_n$ is $\{0,1\}^n$, and two vertices are adjacent if they differ on one coordinate. It is known that the number of spanning trees of $C_n$ is 
\begin{equation}\label{eq:spanning-cube}
T(C_n)=\frac{1}{2^n}\prod_{i=1}^n(2i)^{n \choose i}.
\end{equation}
This formula can be obtained by using the matrix-tree theorem and then determining the eigenvalues of the Laplacian of the hypercube. This, in turns, can be done either using the representation theory of Abelian groups (applied to the group $(\ZZ/2\ZZ)^n$)~\cite[chapter 5]{Stanley:vol2}, or by guessing and checking a set of eigenvectors~\cite{Martin-Reiner:spanning-cube}. We will give two combinatorial proofs of this result, thereby answering an open problem mentioned for instance in~\cite[pp. 62]{Stanley:vol2} and~\cite{Hurlbert:spanning-trees} (the case $n=3$ was actually solved in \cite{Tuffley:spanning-3-cube} by a method different from ours).

We shall discuss refinements and generalizations of~\eqref{eq:spanning-cube} which are best stated in terms of rooted (spanning) forests. A \emph{rooted forest} of a graph $G$ is a subgraph containing every vertex such that each connected component is a tree with a vertex marked as the \emph{root vertex} of that tree. Given a rooted forest, we consider its edges as oriented in such a way that every tree is directed toward its root vertex (equivalently, every non-root vertex has one outgoing edge in $F$); see Figure~\ref{fig:cube-with-diagonals}.
We say that an oriented edge $e=(u,v)$ of the hypercube $C_n$ has \emph{direction} $i\in[n]$ and \emph{spin} $\eps\in\{0,1\}$ if the vertex $v$ is obtained from $u$ by changing the $i$th coordinate from $1-\eps$ to $\eps$. We denote $\dir(e)$ and $\spin(e)$ the direction and spin of the oriented edge $e$.  We then define a generating function in the variables $t$ and $\xx=(x_{1,0},x_{1,1},\ldots,x_{n,0},x_{n,1})$ for the rooted forests of the hypercube $C_n$ as follows:
\begin{equation}\label{def:enumeratorCn}
F_{C_n}(t;\xx):=\sum_{F\textrm{ rooted forest of } C_n}t^{\#\textrm{trees in } F}\prod_{e\in F} x_{\dir(e),\spin(e)}.
\end{equation}
We shall give two combinatorial proofs of the following result:
\begin{equation}\label{eq:spanning-cube-2}
F_{C_n}(t;\xx)=\prod_{S\subseteq [n]}\left(t+\sum_{i\in S} x_{i,0}+x_{i,1}\right).
\end{equation}
Note that the generating function of rooted spanning trees of $C_n$ is simply obtained by extracting the terms which are linear in $t$ in $F_{C_n}(t;\xx)$, so that~\eqref{eq:spanning-cube-2} gives a refinement of~\eqref{eq:spanning-cube}. This refinement was first proved by Martin and Reiner in~\cite{Martin-Reiner:spanning-cube} (in a slightly different form; see Section~\ref{sec:conclusion}) using a matrix-tree method. Using a similar method we shall give a generalization of~\eqref{eq:spanning-cube-2} valid for Cartesian products of complete graphs. However, the main goal of the present article is rather to investigate the combinatorial properties of the rooted forests of the hypercube suggested by~\eqref{eq:spanning-cube-2}.

\fig{width=.7\linewidth}{cube-with-diagonals}{A rooted forest of the cube $C_3$ (left) and of the cube augmented with its main diagonals $D_3$ (right). The root vertices are indicated by large dots.}

In Section~\ref{sec:bunkbed}, we prove a surprising \emph{independence} property for the spins of the parallel edges of a random rooted forest of $C_n$. More precisely, we show that for a uniformly random rooted forest conditioned to have exactly $k$ trees and to contain a given set of edges in direction $n$, the spins of these edges are \emph{independent} and uniform. This property is illustrated in Figure~\ref{fig:example-bunkbed-lemma}.
The independence of the spins remains true when conditioning the forest to have a given number $n_{i,\eps}$ of edges with direction $i$ and spin $\eps$ for all $i\in[n-1]$ and $\eps\in\{0,1\}$, and holds in the more general context of so-called \emph{bunkbed graphs}. 
Using the independence property, it is not hard to prove~\eqref{eq:spanning-cube-2}.

\fig{width=\linewidth}{example-bunkbed-lemma}{The rooted spanning trees of the square containing both vertical edges. The spins of the vertical edges are seen to be independent.}

In Section~\ref{sec:withdiago}, we consider the graph $D_n$ obtained by adding the \emph{main diagonals} to the hypercube $C_n$, that is, the edges joining each vertex to the antipodal vertex; see Figure~\ref{fig:cube-with-diagonals}. We prove a generalization of~\eqref{eq:spanning-cube-2} for the generating function $F_{D_n}(t;\xx,y)$ of rooted forests of $D_n$, where the variable $y$ counts the number of diagonal edges contained in the forests (so that $F_{C_n}(t;\xx)=F_{D_n}(t;\xx,0)$). Our strategy there is to determine combinatorially the roots of $F_{D_n}(t;\xx)$ considered as a polynomial in $t$ by exhibiting some ``killing involutions'' for the rooted forests of $D_n$.

In Section~\ref{sec:matrix-tree}, we establish a generalization of~\eqref{eq:spanning-cube-2} valid for Cartesian products of complete graphs using the matrix-tree theorem. Finally, we gather some additional remarks and open questions in Section~\ref{sec:conclusion}.\\


We end this introduction with a few definitions. 
We call \emph{digraph} a finite directed graph. 
We denote a digraph $G=(V,A)$ to indicate that $V$ is the set of vertices and $A$ is the set of arcs, and for an arc $a\in A$ we denote $a=(u,v)$ to indicate that the arc $a$ goes from the vertex $u$ to the vertex $v$. We shall identify undirected simple graphs with the digraphs obtained by replacing each edge by two arcs in opposite directions. 

A \emph{rooted forest} of a digraph $G$ is a subgraph without cycle, containing every vertex and such that each vertex is incident to at most one outgoing arc. We call \emph{root vertices} the vertices not incident to any outgoing arc. Hence, each connected component of a rooted forest is a tree directed toward its unique root vertex. Two rooted forests are represented in Figure~\ref{fig:cube-with-diagonals}. We denote by $k(F)$ the number of connected components of the forest $F$. A digraph is \emph{weighted} if every arc $a$ has a weight $w(a)$ (which can be an arbitrary variable). The \emph{weight} of a forest $F$ is $w(F)=\prod_{a\in F}w(a)$ where the product is over the arcs contained in $F$. 
The \emph{forest enumerator} of a weighted digraph $G$ is 
\begin{equation}\label{eq:forest-enumerator}
F_G(t)=\sum_{F\textrm{ rooted forest of } G}t^{k(F)}w(F).
\end{equation}
Observe that  upon defining the weight of the arcs of $C_n$ with direction $i$ and spin $\eps$ to be $x_{i,\eps}$, the generating function $\mF_{C_n}(t;\xx)$ defined by~\eqref{def:enumeratorCn} coincides with the forest enumerator $F_{C_n}(t)$ defined by~\eqref{eq:forest-enumerator}.

Let $G=(U,A)$ and $G'=(U',A')$ be digraphs. The \emph{Cartesian product} $G\times G'$ is the digraph $H$ with vertex set $U\times U'$ and arc set obtained as follows: for every arc $a=(u,v)\in A$ and every vertex $w'\in U'$ there is an arc of $H$ from $(u,w')$ to $(v,w')$, and for every arc $a'=(u',v')\in A'$ and every vertex $w\in U$ there is an arc of $H$ from $(w,u')$ to $(w,v')$. An example of Cartesian product is given in Figure~\ref{fig:bunkbed-graph-strong} (top line). Observe that the hypercube $C_n$ is equal to the Cartesian product 
$\ds C_n=\underbrace{K_2\times \cdots \times K_2}_{n \textrm{ times}},$
where $K_2$ is the complete graph on two vertices considered as a digraph, that is, $K_2$ is the digraph with two vertices and two arcs in opposite directions joining these vertices. 



\section{Spin independence approach for the hypercube}\label{sec:bunkbed}
In this section we study the rooted forests of graphs of the form of the Cartesian product $G\times K_2$ (and more generally of certain subgraphs of the strong product $G\boxtimes K_2$). We prove an independence property for the spins of the edges of a random rooted forest of such graph: the spin of the edges in the different copies of $K_2$ are \emph{independent}. The independence property remains true if one conditions the forest to contain a given number of edges of each type, and readily gives ~\eqref{def:enumeratorCn}.


\fig{width=\linewidth}{bunkbed-graph-strong}{The Cartesian product $G\times K_2$ (top line) and the strong product $G\boxtimes K_2$ (bottom line).}

We start with a few definitions. Let $G=(U,A)$ be a loopless digraph, let $K_2$ be the complete graph with vertex set $\{0,1\}$ (considered as a digraph). Recall that $G\times K_2$ denotes the Cartesian product of $G$ by $K_2$. We denote by $G\boxtimes K_2$ the \emph{strong product} of $G$ by $K_2$ which is the graph obtained from $G\times K_2$ by adding an arc from $(u,0)$ to $(v,1)$ and an arc from $(u,1)$ to $(v,0)$ for each arc $a=(u,v)$ in $A$. The graphs $G\times K_2$ and $G\boxtimes K_2$ are represented in Figure~\ref{fig:bunkbed-graph-strong}.
For $a=(u,v)\in A$ and $\eps\in\{0,1\}$, we call \emph{straight $a$-arcs} the arcs of $H=G\boxtimes K_2$ joining $(u,\eps)$ to $(v,\eps)$, and call \emph{diagonal $a$-arcs} the arcs of $H$ joining $(u,\eps)$ to $(v,1-\eps)$.
For $u\in U$ and $\epsilon\in\{0,1\}$, we call \emph{vertical arc of spin $\eps$ at $u$} the arc of $H$ from $(u,1-\eps)$ to $(u,\eps)$. Note that if $G$ has some loops then there will be both ``vertical arcs'' and ``diagonal arcs'' with the same endpoints. 
We say that two rooted forests of $H$ have the same $G$-\emph{projection} if they contain the same number of straight $a$-arcs and the same number of diagonal $a$-arcs for all $a\in A$, and moreover contain vertical arcs at the same vertices of $G$. Two rooted forests having the same projection are shown in Figure~\ref{fig:bunkbed-projection-strong}. We now state the key result of this section.

\fig{width=.7\linewidth}{bunkbed-projection-strong}{Two rooted forests of the graph $H=G\boxtimes K_2$ having the same $G$-projection. In this picture, for the sake of readability, the diagonal arcs of $H$ not contained in the forests are not drawn. The forests are drawn in thick lines and the root vertices are represented by large dots.}

\begin{thm}\label{thm:bunkbed}
Let $G=(U,A)$ be a digraph. Let $F_0$ be a rooted forest of $H=G\boxtimes K_2$, and let $S\subseteq U$ be the set of vertices $u$ of $G$ such that $F_0$ contains a vertical arc at $u$. Let $F$ be a uniformly random rooted forest of $H$ conditioned to have the same $G$-projection as $F_0$. For all $u$ in $S$, let $\si_u\in\{0,1\}$ be the spin of the vertical arc at $u$ of the forest $F$. Then the random variables $\si_u,~u\in S$ are independent and uniformly random in~$\{0,1\}$. 
\end{thm}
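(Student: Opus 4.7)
The plan is to translate the theorem into an algebraic identity on the weighted forest enumerator of $H$ and verify that identity via the matrix-tree theorem together with a change of basis. I would give weight $x_a$ to each straight $a$-arc, weight $y_a$ to each diagonal $a$-arc (for $a\in A$), and weight $z_{v,\eps}$ to the vertical arc at $v$ of spin $\eps$. The statement of the theorem, over all choices of $F_0$ (equivalently, over all $G$-projections), is equivalent to the algebraic assertion that the weighted forest enumerator $F_H(t)$ depends on the pair $(z_{v,0},z_{v,1})$ only through the sum $z_{v,0}+z_{v,1}$, for each $v\in U$ separately. Indeed, given this, the coefficient of any monomial $\prod_{v\in S} z_{v,\sigma_v}$, with the $x$'s, $y$'s and $t$ fixed, is the same for every $\sigma:S\to\{0,1\}$, which is precisely the claimed independence and uniformity of the spins.

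By the matrix-tree theorem, $F_H(t)=\det(M)$ with $M=tI+L^{\mathrm{out}}$ the weighted out-Laplacian of $H$, viewed as a block matrix with $2\times 2$ blocks indexed by the two copies of each $v\in U$. At each $v\in U$, I would apply the Hadamard change of basis $(e_{(v,0)},e_{(v,1)})\mapsto(e_{(v,0)}+e_{(v,1)},\ e_{(v,0)}-e_{(v,1)})$. A direct calculation shows that in the new basis, each off-diagonal block between distinct $v,u$ with $a=(v,u)\in A$ becomes the diagonal matrix $\mathrm{diag}(-(x_a+y_a),\,-(x_a-y_a))$, while each diagonal block at $v$ becomes the upper triangular matrix
\[
\begin{pmatrix}\tilde C_v & z_{v,1}-z_{v,0}\\ 0 & \tilde C_v+z_{v,0}+z_{v,1}\end{pmatrix},\qquad \tilde C_v:=t+\sum_{a=(v,u)\in A}(x_a+y_a).
\]
Reordering the basis to place all ``$+$'' vectors before all ``$-$'' vectors exhibits $M$ as block upper-triangular with two $|U|\times|U|$ diagonal blocks $M_{++}$ and $M_{--}$, so $\det(M)=\det(M_{++})\det(M_{--})$. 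The factor $\det(M_{++})$ is independent of all $z$-variables, while $\det(M_{--})$ involves the $z$'s only through the sums $z_{v,0}+z_{v,1}$. This is exactly the desired algebraic identity.

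The main obstacle is the change-of-basis computation and the verification of the block upper-triangular structure; this is a direct but slightly delicate matrix calculation, in which one must track how the vertical weights $z_{v,\eps}$ (which contribute to both the diagonal and the off-diagonal entries inside each $v$-block) rearrange under the Hadamard transform. Once established, the theorem follows by coefficient extraction in the $z$-variables. A purely bijective proof---an involution on rooted forests with fixed $G$-projection that flips only a single spin $\sigma_u$---would be more in the combinatorial spirit of this section and seems plausible, but appears to require delicate cycle-breaking moves when the naive ``reverse the $u$-vertical and swap the source/target spins of the adjacent non-vertical arc out of $(u,0)$'' operation creates a directed cycle; the algebraic route above bypasses these combinatorial subtleties.
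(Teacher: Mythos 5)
Your proof is correct, but it takes a genuinely different route from the paper. The paper proves Theorem~\ref{thm:bunkbed} by a purely combinatorial induction on $|U|$: a counting argument ($\al+\be<2n$) guarantees a vertex $u$ that either carries a vertical arc and receives no non-vertical arc of $F_0$ (in which case deleting $(u,0),(u,1)$ gives a $2$-to-$1$ projection that decouples $\si_u$ from the rest), or carries no vertical arc and receives at most one arc (in which case one contracts the incoming arc with each possible outgoing arc and deletes $u$, getting a bijection onto forests of a smaller strong product). Your argument instead encodes the full statement as the algebraic identity that $F_H(t)$, with separate weights $z_{v,\eps}$ on vertical arcs, depends on each pair $(z_{v,0},z_{v,1})$ only through $z_{v,0}+z_{v,1}$ --- this reduction is valid because no forest can contain both vertical arcs at a vertex, so $F_H$ is affine in each pair and the coefficient extraction you describe does give joint uniformity of $(\si_u)_{u\in S}$ --- and then verifies it by a Hadamard conjugation of the out-Laplacian; I checked the block computation (the off-diagonal block $-x_aI-y_a\sigma_x$ diagonalizes to $\mathrm{diag}(-x_a-y_a,\,-x_a+y_a)$, and the $v$-block becomes upper triangular with diagonal $\tilde C_v$ and $\tilde C_v+z_{v,0}+z_{v,1}$) and it is right. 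What your route buys is brevity, an explicit factorization $F_H=\det(M_{++})\det(M_{--})$ that anticipates Proposition~\ref{prop:Laplacian} and Lemma~\ref{lem:eigenvalues-add-up}, and robustness (no case analysis). What it gives up is precisely what this section of the paper is after: the stated goal is a combinatorial proof avoiding the matrix-tree/eigenvalue machinery, which is deliberately quarantined in Section~\ref{sec:matrix-tree}; your proof is in effect a special case of that machinery, so it would not serve as an independent combinatorial derivation of~\eqref{eq:spanning-cube-2}. Your closing remark about the difficulty of a naive spin-flipping involution is well taken --- the paper's induction is exactly the device that circumvents it.
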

Before proving Theorem~\ref{thm:bunkbed}, let us derive a few corollaries. Roughly speaking Theorem~\ref{thm:bunkbed} implies that in order to enumerate the rooted forests of $H=G\boxtimes K_2$ it is sufficient to enumerate the rooted forests without vertical arc of spin 1. The following corollary makes this statement precise.
\begin{cor}\label{cor:bunkbed-GF}
Let $G=(U,A)$ be a digraph. Let $H$ be the digraph $G\boxtimes K_2$, with arcs weighted as follows: for $\eps\in\{0,1\}$ the vertical arcs of $H$ of spin $\eps$ have weight $x_\eps$, and for $a\in A$ the straight and diagonal $a$-arcs have weight $w_a$ and $w_a'$ respectively. Then the forest enumerator $F_H(t)\equiv F_{H}(t;x_0,x_1)$ satisfies $F_{H}(t;x_0,x_1)=F_{H}(t;x_0+x_1,0)$.
\end{cor}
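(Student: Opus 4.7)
The plan is to decompose the sum defining $F_H(t;x_0,x_1)$ according to $G$-projection classes and apply Theorem~\ref{thm:bunkbed} within each class. Fix a projection class $\mC$. Because the $G$-projection records, for every $a\in A$, the numbers $n_a^s$ and $n_a^d$ of straight and diagonal $a$-arcs as well as the set $S\subseteq U$ of vertices carrying a vertical arc, every forest $F\in\mC$ has the same edge count $|S|+\sum_a(n_a^s+n_a^d)$ and therefore the same component count $k_\mC := 2|U| - |E(F)|$; likewise its non-vertical weight $w_\mC := \prod_{a\in A} w_a^{n_a^s}(w_a')^{n_a^d}$ is a common constant of the class.

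With these constants pulled out, the class sum reads
$$\sum_{F\in\mC} t^{k(F)} w(F) \;=\; t^{k_\mC}\, w_\mC \sum_{F\in\mC} \prod_{u\in S} x_{\sigma_u(F)},$$
where $\sigma_u(F)\in\{0,1\}$ is the spin of the vertical arc of $F$ at $u$. By Theorem~\ref{thm:bunkbed}, when $F$ is drawn uniformly from $\mC$ the vector $(\sigma_u)_{u\in S}$ is uniform on $\{0,1\}^S$, so each of the $2^{|S|}$ spin patterns is realized by exactly $|\mC|/2^{|S|}$ forests of $\mC$. The inner sum therefore collapses to $(|\mC|/2^{|S|})(x_0+x_1)^{|S|}$, yielding the closed form
$$\sum_{F\in\mC} t^{k(F)} w(F) \;=\; \frac{|\mC|}{2^{|S|}}\; t^{k_\mC}\, w_\mC\, (x_0+x_1)^{|S|}$$
for the contribution of $\mC$ to $F_H(t;x_0,x_1)$.

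To finish, I would perform the same accounting for $F_H(t;x_0+x_1,0)$: after the substitution $x_1\mapsto 0$, the surviving forests of $\mC$ are precisely the $|\mC|/2^{|S|}$ ones whose vertical arcs all have spin $0$, each contributing $t^{k_\mC} w_\mC (x_0+x_1)^{|S|}$. The two class contributions therefore agree, and summing over all $G$-projection classes gives $F_H(t;x_0,x_1)=F_H(t;x_0+x_1,0)$. The whole corollary is a bookkeeping consequence of the independence statement, so I anticipate no serious obstacle beyond correctly invoking Theorem~\ref{thm:bunkbed} and verifying that the component count and non-vertical weight are indeed invariants of the projection.
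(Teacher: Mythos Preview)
Your argument is correct and follows essentially the same approach as the paper: both derive the identity as straightforward bookkeeping from Theorem~\ref{thm:bunkbed}. The only cosmetic difference is that the paper groups forests by the slightly coarser data $(v,\mm,\nn)$ (total number of vertical arcs rather than the set $S$) and phrases the consequence of Theorem~\ref{thm:bunkbed} as a binomial distribution for the spin-$0$ count, whereas you stay at the level of individual $G$-projection classes; your version is arguably a touch cleaner since it avoids that extra aggregation step.
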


\begin{proof}
For a integer $v$ and tuples of integers $\mm=(m_a)_{a\in A}$ and $\nn=(n_a)_{a\in A}$, we let $\mF(v,\mm,\nn)$ be the set of rooted forests of $H$ having $v$ vertical arcs, and $m_{a}$ straight $a$-arcs and $n_a$ diagonal $a$-arcs for all $a\in A$. By Theorem~\ref{thm:bunkbed}, the number of vertical arcs of spin 0 in a uniformly random forest $F$ in $\mF(v,\mm,\nn)$ has a binomial distribution with parameter $(v,1/2)$. Hence, 
$$\sum_{F\in \mF(v,\mm,\nn)}x_0^{\#\textrm{vertical arcs of spin }0}\,x_1^{\#\textrm{vertical arcs of spin }1}=|\mF_{v,\mm,\nn}|\left(\frac{x_0}{2}+\frac{x_1}{2}\right)^v.$$
Thus, 
$$F_{H}(t;x_0,x_1)\equiv \sum_{\mm,\nn,v}\prod_{a\in A}w_a^{m_a}{w_{a}'}^{n_a}\sum_{F\in \mF(v,\mm,\nn)}x_0^{\#\textrm{vertical arcs of spin }0}\,x_1^{\#\textrm{vertical arcs of spin }1}$$
is unchanged when replacing $(x_0,x_1)$ by $(x_0+x_1,0)$.
\end{proof} 
In the next two corollaries, we focus on the forests of the Cartesian products $G\times K_2$,  which are simply the forests of $G\boxtimes K_2$ without diagonal arcs.

\begin{cor}\label{cor:K2-induction}
Let $G=(U,A)$ be a weighted digraph with the weight of an arc $a\in A$ denoted by $w_a$. 
Let $H$ be the digraph $G\times K_2$ with arcs weighted as follows: for $\eps\in\{0,1\}$ the vertical arcs of spin $\eps$ of $H$ have weight $x_\eps$, and for $a\in A$ the (straight) $a$-arcs of $H$ have weight $w_a$. Then the forest enumerators of $G$ and $H$ are related by 
$$F_H(t)=F_G(t)F_G(t+x_0+x_1).$$
\end{cor}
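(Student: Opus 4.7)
The plan is to reduce to a simpler case using Corollary~\ref{cor:bunkbed-GF} and then give a bijective decomposition of the rooted forests of $H$ into pairs of rooted forests of $G$ together with a marking.

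Since $G\times K_2$ is obtained from $G\boxtimes K_2$ by setting the diagonal-arc weights to zero, the identity of Corollary~\ref{cor:bunkbed-GF} specializes to $F_H(t;x_0,x_1)=F_H(t;x_0+x_1,0)$ in this setting. So it suffices to treat the case where all vertical arcs have spin $0$ (pointing from $(u,1)$ to $(u,0)$) and carry the common weight $x:=x_0+x_1$. Now let $F$ be a rooted forest of $H$, and let $F^+$, $F^-$ denote the sets of (straight) arcs of $F$ contained in the top copy $G\times\{1\}$ and the bottom copy $G\times\{0\}$ respectively; each is automatically a rooted forest of $G$ on the corresponding copy. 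The remaining arcs of $F$ are vertical, and a vertical arc at $u$ appears in $F$ only if $(u,1)$ has no straight outgoing arc in $F$, i.e.\ only if $u$ is a root of $F^+$. Conversely, given any pair $(F^-,F^+)$ of rooted forests of $G$ and any subset $S$ of the roots of $F^+$, the digraph $F^-\cup F^+\cup\{((u,1),(u,0)):u\in S\}$ is a rooted forest of $H$: every vertex has out-degree at most one by construction, and acyclicity is automatic because all vertical arcs flow downward and so cannot participate in any directed cycle. This yields a bijection between rooted forests $F$ of $H$ and triples $(F^-,F^+,S)$.

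Under this bijection, the roots of $F$ in the bottom copy are the roots of $F^-$, while the roots of $F$ in the top copy are the roots of $F^+$ not in $S$, so $k(F)=k(F^-)+k(F^+)-|S|$ and $w(F)=w(F^-)\,w(F^+)\,x^{|S|}$. Summing over all admissible $S$ contributes a factor of $(t+x)^{k(F^+)}$ for each top forest, so
\begin{equation*}
F_H(t)=\sum_{F^-}t^{k(F^-)}w(F^-)\cdot\sum_{F^+}(t+x)^{k(F^+)}w(F^+)=F_G(t)\,F_G(t+x_0+x_1).
\end{equation*}
The only substantive step is checking that the reconstruction $(F^-,F^+,S)\mapsto F$ indeed lands in rooted forests of $H$, and this is handled cleanly by the downward orientation of vertical arcs guaranteed by the initial reduction to spin $0$; everything else is routine bookkeeping of components and weights.
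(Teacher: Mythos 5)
Your proof is correct and follows essentially the same route as the paper's: reduce to the spin-$0$ case via Corollary~\ref{cor:bunkbed-GF}, then decompose each forest of $H$ into a bottom forest, a top forest, and a choice of which roots of the top forest receive a downward vertical arc, yielding the factors $F_G(t)$ and $F_G(t+x_0+x_1)$. Your write-up merely makes the bijection and the component count $k(F)=k(F^-)+k(F^+)-|S|$ more explicit than the paper does.
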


\begin{proof}
Let us denote $F_H(t)=F_H(t;x_0,x_1)$ in order to make explicit the dependence in the variables $x_0,x_1$. By applying Corollary~\ref{cor:bunkbed-GF} (in the special case where the weights of diagonal arcs are 0), we get $F_H(t;x_0,x_1)=F_H(t;x_0+x_1,0)$, so it only remains to prove that 
$$F_H(t;x_0,0)=F_G(t)F_G(t+x_0).$$
Now, by definition 
$F_H(t;x_0,0)=\sum_{F\in \mF'}w(F)$,
where $\mF'$ is the set of rooted forests without vertical arc of spin 1. For $\eps\in\{0,1\}$, let $G_\eps$ be the subgraph of $H$ isomorphic to $G$ induced by the vertices of the form $(u,\eps),u\in U$. Clearly, any rooted forest in $\mF'$ is obtained by 
\begin{compactitem}
\item[(i)] choosing a rooted forest $F_0$ of $G_0$, 
\item[(ii)] choosing a rooted forest $F_1$ of $G_1$, and then choosing for each root vertex of $F_1$ whether to add a vertical arc (of spin 0) out of this vertex, 
\end{compactitem}
and any choice (i), (ii) gives a rooted forest in $\mF'$ (since it is impossible to create cycles by adding the vertical arcs). Moreover $F_G(t)$ is the generating function of all the possible choices for (i), while $F_G(t+x_0)$ is the generating function of all the possible choices for (ii). This completes the proof. 
\end{proof}

As mentioned earlier the hypercube $C_n$ is equal to $K_2\times \cdots \times K_2$ and we now use Theorem~\ref{thm:bunkbed} to prove~\eqref{eq:spanning-cube-2}.

\begin{cor}\label{cor:cube}
The $n$-dimensional hypercube $C_n$ with weight $x_{i,\eps}$ for the arcs having direction $i$ and spin $\eps$ has forest enumerator
$$F_{C_n}(t;\xx)=\prod_{S\subseteq[n]}\left(t+\sum_{i\in S} x_{i,0}+x_{i,1}\right).$$ 

\end{cor}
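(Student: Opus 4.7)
The plan is a straightforward induction on $n$, using Corollary~\ref{cor:K2-induction} as the inductive engine and the identification $C_n=C_{n-1}\times K_2$ to split off one factor of $K_2$ at a time.

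First I would record the base case $n=0$: the graph $C_0$ is a single vertex, so $F_{C_0}(t)=t$, which matches the right-hand side (the product has a single factor, indexed by $S=\emptyset$, equal to $t$). It is also reassuring to double-check $n=1$ by hand, where $F_{C_1}(t;\xx)=t^2+t(x_{1,0}+x_{1,1})$ and the right-hand side is $t\cdot(t+x_{1,0}+x_{1,1})$.

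For the inductive step, I would write $C_n = C_{n-1}\times K_2$, where the new $K_2$ accounts for direction $n$. Under this identification, the arcs of $C_n$ in direction $n$ become the vertical arcs of the product graph, with spin $\eps$ carrying weight $x_{n,\eps}$, while the arcs of $C_n$ in directions $1,\ldots,n-1$ become the ``straight'' $a$-arcs inheriting the weights of $C_{n-1}$. Applying Corollary~\ref{cor:K2-induction} with $G=C_{n-1}$ then gives
$$F_{C_n}(t;\xx) = F_{C_{n-1}}(t;\xx') \cdot F_{C_{n-1}}\!\left(t+x_{n,0}+x_{n,1};\xx'\right),$$
where $\xx'=(x_{i,\eps})_{i\in[n-1],\eps\in\{0,1\}}$.

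By the induction hypothesis, the first factor equals $\prod_{S\subseteq[n-1]}(t+\sum_{i\in S}(x_{i,0}+x_{i,1}))$, which is exactly the product of the factors in the claimed formula over subsets $S\subseteq[n]$ not containing $n$. The second factor, by the induction hypothesis applied with $t$ replaced by $t+x_{n,0}+x_{n,1}$, equals
$$\prod_{S\subseteq[n-1]}\left(t+(x_{n,0}+x_{n,1})+\sum_{i\in S}(x_{i,0}+x_{i,1})\right)=\prod_{\substack{T\subseteq[n]\\ n\in T}}\left(t+\sum_{i\in T}(x_{i,0}+x_{i,1})\right),$$
via the bijection $S\mapsto S\cup\{n\}$. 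Multiplying the two factors recombines them into the full product over all subsets of $[n]$, which completes the induction. There is no genuine obstacle here; the only mildly delicate point is the bookkeeping that verifies the weights on $C_{n-1}\times K_2$ match those on $C_n$ under the Cartesian-product identification, so that Corollary~\ref{cor:K2-induction} can be applied with the stated specialization $x_0=x_{n,0}$, $x_1=x_{n,1}$.
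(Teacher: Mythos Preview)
Your induction via Corollary~\ref{cor:K2-induction} is correct and is exactly the argument the paper alludes to in the first sentence of its proof (``Corollary~\ref{cor:cube} follows from Corollary~\ref{cor:K2-induction} by induction on $n$''). The paper then opts to write out a different, slightly more direct argument: it uses Corollary~\ref{cor:bunkbed-GF} to replace each pair $(x_{i,0},x_{i,1})$ by $(x_{i,0}+x_{i,1},0)$ simultaneously, reducing to the case where all spin-$1$ weights vanish; in that case a rooted forest with no spin-$1$ arcs is built by independently choosing, at each vertex $v$, either ``root'' or an outgoing spin-$0$ arc in some direction $i$ with $v_i=1$, which yields~\eqref{eq:no1} immediately without induction. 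Your approach has the virtue of being a clean mechanical induction; the paper's detailed proof trades the induction for a single global reduction plus a one-line counting argument, and has the side benefit of exhibiting an explicit bijection between the factors of the product and the vertices of $C_n$.
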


\begin{proof}
Corollary~\ref{cor:cube} follows from Corollary~\ref{cor:K2-induction} by induction on $n$. Below we give a slightly more direct proof. First observe that by Corollary \ref{cor:bunkbed-GF},  the forest enumerator $F_{C_n}(t;\xx)$ is unchanged by replacing for all $i\in[n]$ the variables $x_{i,0}$ and $x_{i,1}$ respectively by $x_{i,0}+x_{i,1}$ and $0$. Hence it only remains to prove 
\begin{equation}\label{eq:no1}
F_{C_n}(t;x_{1,0},0,\ldots,x_{n,0},0)=\prod_{S\subseteq[n]}\left(t+\sum_{i\in S} x_{i,0}\right).
\end{equation}
By definition, $\ds F_{C_n}(t;x_{1,0},0,\ldots,x_{n,0},0)=\sum_{F\in\mF'}w(F),$
where $\mF'$ is the set of rooted forests of $C_n$ without arc of spin 1. A forest in $\mF'$ represented in Figure~\ref{fig:cube-with-diagonals}. 
Such a forest is obtained by choosing for each vertex $v=(v_1,\ldots,v_n)\in\{0,1\}^n$ either to make this vertex a root vertex (this contributes weight $t$) or to make it a vertex with outgoing arc (of spin 0) in direction $i$ for $i$ in the subset $S_v=\{i\in[n],~v_i=1\}$ (this contributes weight $\sum_{i\in S_v} x_{i,0}$). Since any such choice leads to a distinct rooted forest in $\mF'$, we get~\eqref{eq:no1}.
\end{proof}

The rest of this section is devoted to the proof of Theorem~\ref{thm:bunkbed}.

\begin{proof}[Proof of Theorem~\ref{thm:bunkbed}] Let $\mF_0$ be the set of rooted forests of $H=G\boxtimes K_2$ having the same $G$-projection as $F_0$. The rooted forest $F$ is chosen uniformly in $\mF_0$ and we want to prove that the spins of its vertical arcs are uniformly random and independent. We will prove this property by induction on the number $n$ of vertices of $G$. The property is obvious for $n=1$. We now suppose that it holds for any graph $G'$ with less than $n$ vertices, and we want to prove the property for $G$. 

Let $\al$ be the number of non-vertical arcs of $F_0$. Let $\be=|S|$ be the number of vertices $u$ of $G$ such that $F_0$ contains a vertical arc at $u$, and let $\ga=|U\setminus S|=n-\be$ be the number of other vertices of $G$. Since the forest $F_0$ has $\al+\be$ arcs and $2n$ vertices, one gets $\al+\be<2n$, hence $\al<\be+2\ga$. 
Thus there exists either 
\begin{compactitem}
\item[(a)] a vertex $u\in S$ such that $F_0$ contains no $a$-arc with $a\in A$ directed toward $u$, 
\item[(b)] or a vertex $u\in U\setminus S$ such that $F_0$ contains at most one $a$-arc with $a\in A$ directed toward $u$. 
\end{compactitem}
Cases (a) and (b) are illustrated in Figure~\ref{fig:proof-bunkbed-strong}. In both cases we will apply the induction hypothesis on graphs obtained from $G$ by deleting the vertex $u$.
\fig{width=.9\linewidth}{proof-bunkbed-strong}{Cases (a) and (b) of the inductive proof of Theorem~\ref{thm:bunkbed}. In this picture, for the sake of readability, the diagonal arcs of $H=G\boxtimes K_2$ not contained in the forests are not drawn.}

We first consider the case (a). Let $G'$ be the digraph obtained from $G$ by deleting the vertex $u$ and the incident arcs, and let $H'=G'\boxtimes K_2$. For a rooted forest $T\in\mF_0$ we denote by $\phi(T)$ the rooted forest of $H'$ obtained from $T$ by deleting the vertices $(u,0)$ and $(u,1)$ and the incident arcs; see Figure~\ref{fig:proof-bunkbed-strong}(a). 
Let $F_0'=\phi(F_0)$ and let $\mF_0'$ be the set of rooted forests of $H'$ having the same $G'$-projection as $F_0'$. 
It is easy to see that any rooted forest $T'\in\mF_0'$ has exactly two preimages in $\mF_0$ by the mapping $\phi$: one preimage having a vertical arc of spin 0 at $u$ and the other having a vertical arc of spin 1 at $u$. Hence, if $F$ is uniformly random in $\mF_0$ then $F'=\phi(F)$ is uniformly random in $\mF_0'$. Thus, by the induction hypothesis, the vertical arcs of $F'$ are uniformly random and independent. Moreover the spin $\si_u$ of the vertical arc of $F$ at $u$ is uniformly random and independent of the forest $F'=\phi(F)$. Thus, the spins of all the vertical arcs of $F$ are uniformly random and independent, as wanted.

We now consider the case (b). There is at most one arc $a\in A$ directed toward $u$ such that $F_0$ contains a $a$-arc, and at most two arcs $a'\in A$ directed away from $u$ such that $F_0$ contains an $a'$-arc. Considering all the possibilities is a bit tedious (but not hard), so we shall only treat the most interesting case in detail: we suppose that there is an arc $a_0\in A$ directed toward $u$ and two distinct arcs $a_1,a_2\in A$ directed away from $u$ such that $F_0$ contains an $a_i$-arc for all $i\in\{0,1,2\}$. This situation is represented in Figure~\ref{fig:proof-bunkbed-strong}(b); in that figure the $a_1$-arc of $F$ is a diagonal arc and the $a_0$-arc and $a_2$-arc of $F$ are straight arcs. We partition $\mF_0$ into two subsets $\mF_0=\mF_1\uplus\mF_2$, where for $i\in\{1,2\}$, $\mF_i$ is the set of rooted forests $T\in\mF_0$ such that the $a_0$-arc and $a_i$-arc of $T$ are incident to the same vertex of $H$. It is sufficient to prove that for $i\in\{1,2\}$, if $\mF_i\neq\emptyset$ and $F_i$ is a uniformly random rooted forest in $\mF_i$ then the spins of the vertical arcs of $F_i$ are uniformly random and independent. 

Let $i\in\{1,2\}$ be such that $\mF_i$ is not empty. Let $G_i'$ be the digraph obtained from $G$ by merging the arcs $a_0$ and $a_i$ into a single arc $b$ (going from the origin of $a_0$ to the end of $a_1$) and then deleting the vertex $u$ and all the incident arcs, and let $H_i'=G_i'\boxtimes K_2$. For $T\in \mF_i$ we denote by $\phi_i(T)$ the forest of $H_i'$ obtained from $T$ by merging the $a_0$-arc and the $a_i$-arc into a single arc (the arc created will be a straight $b$-arc if $a_0$ and $a_i$ are both straight or both diagonal, and a diagonal $b$-arc otherwise) and then deleting the vertices $(u,0)$ and $(u,1)$ and the incident arcs . Let $F_{i,0}$ be a rooted forest in $\mF_i$, let $F_{i,0}'=\phi_i(F_{i,0})$, and let $\mF_i'$ be the set of rooted forest of $H_i'$ having the same $G_i'$-projection as $F_{i,0}'$. It is easy to see that $\phi_i$ is a bijection between $\mF_i$ and $\mF_i'$. Thus, if $F_i$ is uniformly random in $\mF_i$ then $F_i'=\phi_i(F_i')$ is uniformly random in $\mF_i'$. Hence, by the induction hypothesis, the spins of the vertical arcs of $F_i$ (which are the same as the spins of the vertical arcs of $F_i'$) are uniformly random and independent. This completes the proof.
\end{proof}
\smallskip

\section{Root identification approach for the hypercube with diagonals}\label{sec:withdiago}
In this section we consider the graph $D_n$ obtained from the hypercube $C_n$ by adding a \emph{diagonal arc} from each vertex $v=(v_1,\ldots,v_n)\in\{0,1\}$ to its \emph{antipodal vertex} $v'=(1-v_1,\ldots,1-v_n)$. The graph $D_3$ is represented in Figure~\ref{fig:cube-with-diagonals}. Let $F_{D_n}(t;\xx,y)$ be the forest enumerator of $D_n$ defined by 
\begin{equation}\label{eq:FDn}
F_{D_n}(t;\xx,y)=\sum_{F\textrm{ rooted forest of } D_n}\,\prod_{a\in F}w(a),
\end{equation}
where the weight $w(a)$ of a diagonal arc is $y$ and the weight of a non-diagonal arc of direction $i$ and spin $\eps$ is $x_{i,\eps}$. The main result of this section is the following product formula for $F_{D_n}(t;\xx,y)$. 
\begin{thm}\label{thm:with-diago}
The forest enumerator of the hypercube with diagonals defined by~\eqref{eq:FDn} equals
\begin{equation}\label{eq:with-diago}
F_{D_n}(t;\xx,y)=\prod_{S\subseteq [n]}\left(t+2y\cdot\textbf{1}_{|S| \textrm{ odd}}+\sum_{i\in S}x_{i,0}+x_{i,1}\right).
\end{equation}
\end{thm}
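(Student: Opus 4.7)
The plan is to prove the identity by showing that both sides, viewed as monic polynomials of degree $2^n$ in $t$, share the same $2^n$ roots. Set $c_S:=2y\cdot\mathbf{1}_{|S|\text{ odd}}+\sum_{i\in S}(x_{i,0}+x_{i,1})$, so the right-hand side is the monic polynomial with roots $\{-c_S:S\subseteq[n]\}$. The expressions $c_S$ are pairwise distinct as elements of the polynomial ring $\mathbb{Q}[\xx,y]$, so it suffices to prove $F_{D_n}(-c_S;\xx,y)=0$ for every $S\subseteq[n]$; the $2^n$ resulting linear factors are then coprime, forcing equality with the right-hand side. The case $S=\emptyset$ (where $c_\emptyset=0$) is immediate, since every rooted forest of $D_n$ has at least one tree and so $F_{D_n}(t;\xx,y)$ has no constant term.

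For nonempty $S$, the strategy is a \emph{killing involution}. Expanding $(-c_S)^{k(F)}=\prod_r(-c_S)$ over the roots of $F$ and distributing,
\[
F_{D_n}(-c_S;\xx,y)=\sum_{(F,\ell)}w(F)\prod_{r\in\mathrm{roots}(F)}w_{\ell(r)},
\]
where $\ell$ assigns to each root $r=v_T$ a \emph{label} drawn from a canonical set $\Lambda_S(r)$ of cardinality $2|S|+2\cdot\mathbf{1}_{|S|\text{ odd}}$. Concretely, $\Lambda_S(r)$ contains one label for each arc of $D_n$ incident to $r$ whose direction lies in $S$, plus (when $|S|$ is odd) one label for each of the two diagonal arcs at $r$; equivalently, one label for each arc at $r$ that flips the character $\chi_S(v_U):=(-1)^{|S\cap U|}$. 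The weight $w_{\ell(r)}$ is minus the weight of the corresponding arc, so the overall sign of a labeled forest $(F,\ell)$ is $(-1)^{k(F)}$ and its unsigned weight is $w(F)$ times the arc weights named by the labels.

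I would construct the involution by toggling between labels and arcs along cycles. Interpret each label as a \emph{ghost arc} at its root; together with $F$ this gives an out-functional digraph on $V(D_n)$ in which every cycle uses at least one ghost arc, since the real arcs of $F$ form an acyclic subgraph. Fix a total ordering of the vertices of $D_n$ and of the arc types at each vertex; for each labeled forest $(F,\ell)$, locate the smallest vertex $v^{*}$ on some cycle, and then the first ghost arc encountered as one traverses the cycle out of $v^{*}$. Toggling this arc---promoting it from ghost to real (and removing the root label of its tail) or conversely demoting it from real to ghost (and making its tail a new labeled root)---yields an involution on labeled forests that pairs each configuration with one of opposite sign and identical unsigned weight, because matching label and arc weights are equal by construction.

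The main obstacle is handling labels whose associated arc is \emph{incoming} to the root rather than outgoing from it: for such labels there is no literal ghost arc leaving the root, and the toggle has to be set up so that these ``non-matching'' labels pair up with the other half of the direction-$i$ edge at the appropriate neighbor. This is where the parity condition $\mathbf{1}_{|S|\text{ odd}}$ for the diagonal term enters naturally, because a diagonal arc flips every coordinate and hence flips $\chi_S$ precisely when $|S|$ is odd; in the even case the diagonal contributes nothing, while in the odd case one needs exactly the two diagonal labels (accounting for the $2y$) to close the involution. I would verify the construction first on $D_2=K_4$ and $D_3$ before formalising the general argument.
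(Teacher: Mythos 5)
Your setup is sound and matches the paper's opening move: both identify the claimed linear factors as roots of the monic degree-$2^n$ polynomial $F_{D_n}(t;\xx,y)$, and both expand $(-c_S)^{k(F)}$ into a signed sum over forests with labeled roots. But the central step --- the killing involution --- is never actually constructed, and the obstacle you flag in your last paragraph is exactly where the whole argument lives. Your labels name individual arcs at the root, half of which are \emph{incoming}; for those there is no outgoing ghost arc to toggle, the labeled objects do not assemble into functional digraphs, and the standard cycle-toggling involution does not apply. The paper avoids this by labeling each root with a \emph{direction} $i\in S'$ (weight $-(x_{i,0}+x_{i,1})$, or $-2y$ for the diagonal direction) and closing each labeled forest to the functional digraph $\ov{F}$ obtained by adding the unique \emph{outgoing} arc of direction $i$ at each root labeled $i$. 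Grouping by $\ov{F}=C$ reduces the sum, for each cycle of $C$, to a difference $\prod\tw(a)-\prod w(a)$ over the arcs with direction in $S'$; since each cycle contains an \emph{even} number of such arcs (the parity of $\sum_{i\in S}v_i$ flips exactly along them), this becomes $\prod\hw(a)-\prod w(a)$, where $\hw$ swaps the spin ($x_{i,\eps}\mapsto x_{i,1-\eps}$) and fixes $y$.

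The more serious issue is that for $y\neq 0$ this local cancellation \emph{fails} for general weights: a cycle containing diagonal arcs need not have balanced spins, so $\prod\hw(a)\neq\prod w(a)$ when $x_{i,0}\neq x_{i,1}$. The paper states this explicitly and completes the proof only by invoking Corollary~\ref{cor:bunkbed-GF} (a consequence of the spin-independence Theorem~\ref{thm:bunkbed}) to reduce to the symmetric case $x_{i,0}=x_{i,1}$, where $\hw=w$ and everything cancels. Your proposal contains no analogue of this reduction; since the natural local sums provably do not vanish for general $\xx$ and $y\neq 0$, any correct involution must pair configurations non-locally, and nothing in your toggling scheme indicates how. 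Testing $D_2$ and $D_3$ would likely be done with symmetric specializations and would not expose this. For $y=0$ your plan is much closer to viable: there the cycles contain no diagonals, each cycle carries equal numbers of spin-$0$ and spin-$1$ arcs in every direction, and this balance is precisely what the paper uses to obtain~\eqref{eq:spanning-cube-2} without Theorem~\ref{thm:bunkbed}; but even there you still need to resolve the incoming-arc labels, which the paper's coarser direction-labels sidestep entirely.
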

Observe that the forest enumerator of the hypercube is $F_{C_n}(t;\xx)=F_{D_n}(t;\xx,0)$, hence Theorem~\ref{thm:with-diago} gives a generalization of~\eqref{eq:spanning-cube-2}. The rest of this section is devoted to the proof of Theorem~\ref{thm:with-diago}. 
The proof below uses Theorem~\ref{thm:bunkbed} as one of its ingredients\footnote{Using Corollary \ref{cor:bunkbed-GF} it is actually sufficient to prove \eqref{eq:with-diago} in the special case where $x_{1,1}=x_{2,1}=\ldots=x_{n,1}=0$, but we have not found a more direct proof of this special case.}.
However the case $y=0$ corresponding to~\eqref{eq:spanning-cube-2} does not require Theorem~\ref{thm:bunkbed} hence we obtain an independent combinatorial proof of this formula.

It is clear from the definitions that $F_{D_n}(t;\xx,y)$ is a monic polynomial in $t$ of degree~$2^n$. Thus, in order to prove Theorem~\ref{thm:with-diago} it suffices to show that for all $S\subseteq [n]$,
\begin{equation}\label{eq:tobeproved}
F_{D_n}(-2y\cdot\textbf{1}_{|S| \textrm{ odd}}-\sum_{i\in S}x_{i,0}+x_{i,1};\xx,y)=0.
\end{equation}
We now fix a subset $S\subseteq [n]$ and establish~\eqref{eq:tobeproved} by exhibiting some ``killing involutions''. We denote $S'=S$ if $S$ is even and $S'=S\cup \{0\}$ if $S$ is odd. We also say that the diagonal arcs of $D_n$ have \emph{direction 0}.
A rooted forest of $D_n$ is $S$-\emph{labeled} if every root vertex has a \emph{label} in $S'$. For a $S$-labeled forest $F$, we denote 
$$w(F)=\prod_{a \emph{ arc of }F}w(a)\times \prod_{r\textrm{ root vertex of } F}\ow(r),$$
where $\ow(r)=-2y$ if the root vertex $r$ is labeled 0, and $\ow(r)=-x_{i,0}-x_{i,1}$ if $r$ is labeled $i>0$. With this notation we immediately get
$$
F_{D_n}(-2y\cdot\textbf{1}_{|S| \textrm{ odd}}-\sum_{i\in S}x_{i,0}+x_{i,1};\xx,y)=\sum_{F\in \mF_S}w(F),
$$
and it remains to prove that 
\begin{equation}\label{eq:tobeproved2}
\sum_{F\in \mF_S}w(F)=0.
\end{equation}

Let $\mC$ be the set of subgraphs of $D_n$ such that every vertex of $D_n$ is incident to exactly one outgoing arc. Any element $C\in\mC$ is made of a some disjoint directed cycles together with directed trees rooted on the vertices of the cycles. 
For a labeled rooted forest $F\in \mF_S$, we denote by $\ov{F}$ the subgraph in $\mC$ obtained from $F$ by adding the arc of direction $i$ going out of each root vertex labeled $i$ for all $i\in \{0,\ldots,n\}$ (with the convention that diagonal arcs have direction 0). We get
$\ds \sum_{F\in \mF_S}w(F)=\sum_{C\in\mC}~\sum_{F\in \mF_S,~\ov{F}=C}w(F),$
and now proceed to compute $\ds \sum_{F,\,\ov{F}=C}w(F)$ for a given subgraph $C\in\mC$.

Let $C\in \mC$ and let $C^{(1)},\ldots,C^{(k)}$ be the directed cycles of $C$, and let $C^{(0)}$ be the set arcs of $C$ which are not in cycles. For all $j\in[k]$, we denote by $C_S^{(j)}$ the set of arcs in $C^{(j)}$ having their direction in $S'$, and we denote $\ov{C}^{(j)}_{S}=C^{(j)}\setminus C^{(j)}_{S}$. 
The forests $F$ such that $\ov{F}=C$ are obtained from $C$ by removing an arbitrary subset of arcs in $C^{(0)}_S$, and by removing a non-empty subset of arcs in $C^{(j)}_S$ for all $j=1,\ldots,k$. Moreover, if an arc $a\in C$ is not removed then its contribution to the weight $w(F)$ is $w(a)$, while if $a$ is removed then its contribution to $w(F)$ is $\ow(a)=-2y$ if $a$ is a diagonal edge and $\ow(a)=-x_{i,0}-x_{i,1}$ if it is a non-diagonal edge of direction $i$ and spin $\eps$. Thus 
$$\sum_{F\in\mF_S,~\ov{F}=C}\!\!\!w(F)=\prod_{a\in \ov{C}_{S}^{(0)}}\!\!w(a)\prod_{a\in C_S^{(0)}}\!\!\tw(a)\times\prod_{j=1}^k\prod_{a\in \ov{C}_{S}^{(j)}}\!\!w(a)\left(\prod_{a\in C^{(j)}_S}\!\!\tw(a)-\prod_{a\in C_{S}^{(j)}}\!\!w(a)\right),$$
where $\tw(a)=w(a)+\ow(a)$.
Now we claim that for all $j\in[k]$ the number of arcs in $C^{(j)}_S$ is even. For this purpose, consider for each vertex $v\in\{0,1\}^n$ the quantity $v_S=\sum_{i\in S}v_i$. The parity of $v_s$ is changing along arcs having direction in $S'$ and not changing along the other arcs. Hence for all $j\in[k]$ the number of arcs with direction in $S'$ in the cycle $C^{(j)}$ is even. Therefore, 
\begin{equation}\label{eq:tobekilled}
\sum_{F\in\mF_S,~\ov{F}=C}\!\!\!\!w(F)=\prod_{a\in \ov{C}_{S}^{(0)}}\!\!\!w(a)\prod_{a\in C_S^{(0)}}\!\!\!\tw(a)\times\prod_{j=1}^k\prod_{a\in \ov{C}_{S}^{(j)}}\!\!w(a)\left(\prod_{a\in C^{(j)}_S}\!\!\!\hw(a)-\prod_{a\in C_{S}^{(j)}}\!\!\!w(a)\right),
\end{equation}
where $\hw(a)=-\tw(a)$, that is, $\hw(a)=y$ if $a$ is a diagonal edge, and $\hw(a)=x_{i,1-\eps}$ if $a$ is a non-diagonal edge of direction $i\in S$ and spin $\eps$. 

Let us now briefly consider the case $y=0$, which leads to an independent proof of~\eqref{eq:spanning-cube-2}. In the case $y=0$ the right-hand side of~\eqref{eq:tobekilled} is clearly 0 unless $C$ has no diagonal arc. Moreover, if $C$ has no diagonal arc then for all $j\in[k]$, $i\in[n]$ the cycle $C^{(j)}$ contains as many arcs with direction $i$ and spin 0 as arcs with direction $i$ and spin 1. Hence in this case, 
$\ds \prod_{a\in C^{(j)}_S}\!\!\hw(a)=\prod_{a\in C_{S}^{(j)}}\!\!w(a),$
so that $\ds \sum_{F\in\mF_S,~\ov{F}=C}w(F)$ is always~0. Thus, we have proved~\eqref{eq:tobeproved2} and therefore~\eqref{eq:with-diago} in the case $y=0$, which is precisely~\eqref{eq:spanning-cube-2}.

We now resume our analysis in the case $y\neq 0$. It is not true that the right-hand side of~\eqref{eq:tobekilled} is~0 in general. However in the case where $x_{i,0}=x_{i,1}$ for all $i\in S$ one has $\hw(a)=w(a)$ for all arc $a$ having direction in $S'$, hence $\sum_{F\in\mF_S,~\ov{F}=C}w(F)=0$.
This gives~\eqref{eq:tobeproved2} and therefore~\eqref{eq:with-diago} in the case where $x_{i,0}=x_{i,1}$ for all $i\in [n]$. Equivalently,
$$F_{D_n}(t;\xx',y)=\prod_{S\subseteq [n]}\left(t+2y\cdot\textbf{1}_{|S| \textrm{ odd}}+\sum_{i\in S}x_{i,0}+x_{i,1}\right),$$
for $\ds \xx'=\left(\frac{x_{1,0}+x_{1,1}}{2},\frac{x_{1,0}+x_{1,1}}{2},\ldots,\frac{x_{n,0}+x_{n,1}}{2},\frac{x_{n,0}+x_{n,1}}{2}\right)$.
 We now combine this result with Corollary~\ref{cor:bunkbed-GF}.

First observe that $D_n$ is obtained from the digraph $D_{n-1}\boxtimes K_2$ by removing its straight $a$-arcs for every diagonal arc $a$ of $D_{n-1}$ and removing its diagonal $a'$-arcs for every non-diagonal arc $a'$ of $D_{n-1}$. Therefore, Corollary~\ref{cor:bunkbed-GF} implies that $F_{D_n}(t;\xx,y)$ is unchanged by replacing $(x_{n,0},x_{n,1})$ by $(x_{n,0}+x_{n,1},0)$ or by $\ds (\frac{x_{n,0}+x_{n,1}}{2},\frac{x_{n,0}+x_{n,1}}{2})$. By symmetry, a similar result is true for every direction $i\in[n]$. Therefore $F_{D_n}(t;\xx,y)=F_{D_n}(t;\xx',y)$. This completes the proof of Theorem~\ref{thm:with-diago}.\hfill $\square$

\section{Matrix-tree approach for products of complete graphs}\label{sec:matrix-tree}
In the previous sections we gave combinatorial proofs of formula~\eqref{eq:spanning-cube-2} for the forest enumerator of the hypercube. In this section we instead use the good old matrix-tree approach to establish some generalizations for Cartesian products of complete graphs. 

Let $G=(U,A)$ and $G'=(U',A')$ be weighted digraphs and let $w_a$ be the weight of any arc $a$ in $A\cup A'$. The \emph{weighted Cartesian product} of $G$ and $G'$, is the digraph $H=G\times G'$ where for any  arc $a=(u,v)\in A\cup A'$, the arcs of $H$ corresponding to $a$ (if $a\in A$, these are arcs going from $(u,w)$ to $(v,w)$ for $w\in U'$, while if $a\in A'$ these are arcs going from $(w,u)$ to $(w,v)$ for $w\in U$) have weight $w_a$. 
Observe that the weighted Cartesian product $K_2\times\cdots \times K_2$ of $n$ copies of $K_2$ with the $i$th copy having arc weights $x_{i,0}$ and $x_{i,1}$ is equal to the hypercube with weight $x_{i,\eps}$ for arcs having direction $i$ and spin $\eps$. The following proposition will be proved by combining the matrix-tree theorem with a classical result about the eigenvalues of the Laplacian of a Cartesian product of graphs (see e.g. \cite{Fiedler:eigenvalues-Cartesian}).
\begin{prop}\label{prop:Laplacian}
Let $G$, $G'$ be weighted digraph with respectively $p$ and $q$ vertices. 
Let $F_G(t)$, $F_{G'}(t)$ be the forest enumerators of $G$ and $G'$ as defined by~\eqref{eq:forest-enumerator}. Let $\la_1,\ldots,\la_p$ and $\la_1',\ldots,\la_{q}'$ be the roots (appearing with multiplicity) of $F_G(t)$ and $F_{G'}(t)$ considered as polynomials in $t$ (the roots are taken in the splitting field of these polynomials). Then the forest enumerator of the weighted Cartesian product $H=G\times G'$ is
$$F_H(t)=\prod_{i\in [p],j\in[q]}(t+\la_i+\la_j')= \prod_{j\in [q]}F_{G}(t+\la_j')=\prod_{i\in [p]}F_{G'}(t+\la_i).$$
\end{prop}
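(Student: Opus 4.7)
The plan is to combine the weighted matrix-tree theorem with the classical description of the spectrum of a Kronecker sum: the Laplacian of a Cartesian product is the Kronecker sum of the factor Laplacians, and the forest enumerator is essentially a characteristic polynomial, so the factorization falls out immediately.

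First I would invoke the all-minors (rooted-forest) version of the matrix-tree theorem for digraphs:
\[
F_G(t) = \det\bigl(tI + L(G)\bigr),
\]
where $L(G)$ is the weighted Laplacian of $G$, i.e.\ $L_{uu} = \sum_v w_{(u,v)}$ and $L_{uv} = -w_{(u,v)}$ for $u \neq v$. This can be proved by a principal-minor expansion of the right-hand side: each principal minor of $L(G)$ of size $p-k$ enumerates rooted forests with exactly $k$ trees having a prescribed root set, and collecting terms by powers of $t$ recovers $F_G(t)$. In particular $F_G$ is monic of degree $p$, so its roots $\lambda_i$ (taken in the splitting field, with multiplicity) are exactly the negatives of the generalized eigenvalues $\mu_i$ of $L(G)$.

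Next I would verify directly from the construction of the arc set of $H = G \times G'$ that
\[
L(H) = L(G) \otimes I_q + I_p \otimes L(G').
\]
Indeed, the out-degree at a vertex $(u, u')$ of $H$ decomposes as the out-degree of $u$ in $G$ plus the out-degree of $u'$ in $G'$, and every off-diagonal entry of $L(H)$ is contributed by a single $G$-arc or a single $G'$-arc. The two summands commute (because $(A \otimes I)(I \otimes B) = A \otimes B = (I \otimes B)(A \otimes I)$), hence can be simultaneously upper-triangularized in a splitting field; taking bases $(v_i)$ and $(v_j')$ that triangularize $L(G)$ and $L(G')$ respectively, the tensors $v_i \otimes v_j'$ triangularize $L(H)$ with diagonal entries $\mu_i + \mu_j'$.

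Combining these two ingredients,
\[
F_H(t) = \det\bigl(tI + L(H)\bigr) = \prod_{i,j}(t + \mu_i + \mu_j'),
\]
which gives the claimed factorization after identifying the $\lambda_i, \lambda_j'$ with the appropriate signed eigenvalues. The two remaining expressions $\prod_j F_G(t + \lambda_j')$ and $\prod_i F_{G'}(t + \lambda_i)$ then come from grouping the double product along one index at a time and recognizing $F_G(s) = \prod_i (s - \lambda_i)$ (and its $G'$-analogue). The only genuine obstacle is bookkeeping the sign conventions between the matrix-tree determinant and what one calls a ``root'' of $F_G$; beyond that, non-symmetry of the digraph Laplacian causes no trouble since the Kronecker-sum argument only needs simultaneous triangularization, not a spectral decomposition.
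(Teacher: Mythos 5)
Your proposal is correct and follows essentially the same route as the paper: the weighted, directed matrix-tree theorem $F_G(t)=\det(L(G)+t\,\Id)$ combined with the fact that $L(G\times G')=L(G)\otimes \Id_q+\Id_p\otimes L(G')$, whose spectrum is computed by simultaneously upper-triangularizing the two factors via $P\otimes Q$ (the paper cites Fiedler for this step but sketches the identical triangularization argument). The sign bookkeeping you flag at the end is indeed the only subtlety — the $\la_i$ in the statement must be read as the Laplacian eigenvalues, i.e.\ $F_G(t)=\prod_i(t+\la_i)$, as the consistency with Corollary~\ref{cor:K2-induction} confirms.
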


Observe that Corollary~\ref{cor:K2-induction} is a special case of Proposition~\ref{prop:Laplacian} corresponding to $G'=K_2$ (with weight $x_0$ and $x_1$ on the edges of $K_2$). 
Before proving Proposition~\ref{prop:Laplacian}, we explore its consequences for products of complete graphs. We first recall a classical result about the forest enumerator of complete graphs. Let $K_p$ be the complete graph with vertex set $[p]$ (considered as a digraph with $p(p-1)$ arcs). If for all $j \in[p]$ the arcs of $K_p$ directed toward the vertex $j$ are weighted by $x_j$, then the forest enumerator of $K_p$ is 
$$F_{K_p}(t)=t\,(t+x_1+\cdots+x_{p})^{p-1}.$$
This classical result, often attributed to Cayley, has many beautiful proofs~\cite[Chapter 26]{AZ}. Since the roots of $F_{K_p}(t)$ are known explicitly for all $p$,  Proposition~\ref{prop:Laplacian} immediately gives the following result (by induction on $n$).
\begin{corollary}\label{cor:product-complete}
Let $p_1,\ldots,p_n$ be positive integers and $K_{p_1},\ldots,K_{p_n}$ be complete graphs with $p_1,\ldots,p_n$ vertices respectively. For all $i\in[n]$, let the $i$th complete graph $K_{p_i}$ be weighted by assigning a weight $x_{i,\eps}$ to every arc going toward the vertex $\eps$ for all $\eps\in [p_i]$. Then the weighted Cartesian product $K_{p_1}\times \cdots \times K_{p_n}$ has the following forest enumerator 
\begin{equation}\label{eq:product-complete}
F_{K_{p_1}\times \cdots \times K_{p_n}}(t)=\prod_{(v_1,\ldots,v_n)\in[p_1]\times \cdots \times[p_n]}(t+\sum_{i,~v_i\neq 1}x_{i,1}+\ldots+x_{i,p_i}).
\end{equation}
\end{corollary}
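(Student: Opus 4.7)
The plan is to prove Corollary~\ref{cor:product-complete} by induction on $n$, using Proposition~\ref{prop:Laplacian} as the engine and the quoted Cayley formula $F_{K_p}(t) = t\,(t+X)^{p-1}$ (with $X = x_1 + \cdots + x_p$) to read off the roots of the forest enumerator of each complete-graph factor. Writing $F_{K_p}(t) = \prod_\lambda (t+\lambda)$, the multiset of such $\lambda$ consists of $\lambda = 0$ (multiplicity $1$) and $\lambda = X$ (multiplicity $p-1$); I will index this multiset bijectively by the vertex set $[p]$ of $K_p$ via $\lambda_v := X\cdot\textbf{1}_{v \neq 1}$. Setting $X_i := x_{i,1} + \cdots + x_{i,p_i}$, the case $n=1$ of \eqref{eq:product-complete} is then immediate: the right-hand side reads $\prod_{v_1 \in [p_1]}(t + X_1\,\textbf{1}_{v_1 \neq 1})$, which is just Cayley's formula rewritten.

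For the inductive step, factor the product as $K_{p_1} \times \cdots \times K_{p_n} = G \times G'$ with $G = K_{p_1} \times \cdots \times K_{p_{n-1}}$ and $G' = K_{p_n}$. The inductive hypothesis gives $F_G(t) = \prod_{(v_1, \ldots, v_{n-1})}\bigl(t + \sum_{i<n,\, v_i \neq 1} X_i\bigr)$, so the $\lambda$'s of $G$ (with multiplicity) are indexed by tuples in $[p_1] \times \cdots \times [p_{n-1}]$ and take the values $\sum_{i<n,\, v_i\neq 1} X_i$; similarly the $\lambda$'s of $G'$ are $X_n\,\textbf{1}_{v_n \neq 1}$ for $v_n \in [p_n]$. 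Proposition~\ref{prop:Laplacian} then asserts that $F_{G\times G'}(t)$ is the product over all pairs of such $\lambda$'s of $(t + \lambda + \lambda')$, which collates into the product over $(v_1, \ldots, v_n) \in [p_1] \times \cdots \times [p_n]$ of $\bigl(t + \sum_{i \in [n],\, v_i \neq 1} X_i\bigr)$---exactly the right-hand side of \eqref{eq:product-complete}.

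There is no serious obstacle: once the $\lambda$'s of $F_{K_p}$ are packaged and parametrized by $[p]$ via Cayley, the remainder is a bookkeeping exercise around Proposition~\ref{prop:Laplacian}. The only point requiring care is matching sign conventions: since the proposition writes $F_{G\times G'}(t) = \prod(t + \lambda_i + \lambda_j')$, one should work throughout with $\lambda$'s as they appear in the factorizations $F_G(t) = \prod(t + \lambda_i)$ (morally the eigenvalues of the Laplacian) rather than with the actual polynomial roots of $F_G$, which would introduce an unwanted minus sign.
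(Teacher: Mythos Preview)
Your proof is correct and follows exactly the approach the paper takes: the paper simply says that since the roots of $F_{K_p}(t)$ are known via the Cayley formula, Proposition~\ref{prop:Laplacian} gives the result ``by induction on $n$''. You have written out in full the induction that the paper leaves implicit, including the natural bookkeeping device of indexing the multiset of Laplacian eigenvalues of $K_p$ by its vertex set, and you have correctly flagged the sign convention (the $\lambda_i$ in Proposition~\ref{prop:Laplacian} are really the Laplacian eigenvalues, i.e.\ the negatives of the polynomial roots).
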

Corollary \ref{cor:product-complete} is closely related to a formula established by Martin and Reiner in \cite{Martin-Reiner:spanning-cube} using a method similar to ours. Indeed \cite[Theorem 1]{Martin-Reiner:spanning-cube} is equivalent (up to easy algebraic manipulations) to the special case  $x_{i,1}=x_{i,2}=\ldots=x_{i,p_i}$ of \eqref{eq:product-complete}.
Observe also that formula~\eqref{eq:spanning-cube-2} for the hypercube corresponds to the case $p_1=\ldots=p_n=2$ of Corollary~\ref{cor:product-complete} (upon identifying the subsets of $[n]$ with the elements of $[2]^n$).

The rest of this section is devoted to the proof of Proposition~\ref{prop:Laplacian}. We first recall the matrix-tree theorem.
Let $G$ be a simple weighted digraph with vertex set $[n]$. For two vertices $i,j\in [n]$ we define $w_{i,j}$ to be the weight of the arc from vertex $i$ to vertex $j$ if there is such an arc, and to be 0 otherwise. The \emph{Laplacian} of $G$, denoted $L(G)$, is the $n\times n$ matrix whose entry at position $(i,j)\in[n]^2$ is equal to $-w_{i,j}$ if $i\neq j$ and to $\sum_{k=1}^nw_{i,k}$ otherwise.
We now recall the (directed, weighted, forest version of) the matrix-tree theorem\footnote{Our weights $w_{i,j}$ are arbitrary indeterminates as authorized by the combinatorial proofs of the matrix-tree theorem (see e.g.~\cite{Zeilberger:combinatorial-matrix}).} which gives the forest-enumerator of $G$ as a determinant:
\begin{equation}\label{eq:matrix-tree}
F_G(t)\equiv\sum_{F\textrm{ rooted forest of } G}t^{k(F)}w(F)=\det\left(L(G)+t\cdot \Id_n\right),
\end{equation}
where $\Id_n$ denotes the identity matrix of dimension $n\times n$. In other words, for any weighted digraph $G$ the roots of the forest enumerator $F_G(t)$ are the opposite of the eigenvalues of the Laplacian $L(G)$. In order to complete the proof Proposition \ref{prop:Laplacian}, it now suffices to combine this fact with the following result of Fiedler \cite{Fiedler:eigenvalues-Cartesian} (Fiedler actually only considered undirected unweighted graph, but the proof allows for arbitrary weights).
\begin{lemma}[\cite{Fiedler:eigenvalues-Cartesian}]\label{lem:eigenvalues-add-up}
If $G$, $G'$ and $H$ are as in Proposition \ref{prop:Laplacian} and the eigenvalues (taken with multiplicities) of the Laplacians $L(G)$ and $L(G')$ are $\la_1,\ldots,\la_p$ and $\la_1',\ldots,\la_q'$ respectively, then the eigenvalues of $L(H)$ (taken with multiplicities) are $(\la_i+\la'_j)_{i\in[p],j\in[q]}$.
\end{lemma}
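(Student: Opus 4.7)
The plan is to reduce the lemma to the well-known additivity of eigenvalues of a Kronecker sum, and the only real work is setting up the Laplacian identity $L(H)=L(G)\otimes I_q+I_p\otimes L(G')$. Write the vertices of $H=G\times G'$ as pairs $(u,w)\in U\times U'$, and order them so that the matrix of $L(H)$ is naturally indexed by $[p]\times[q]$. From the definition of the Cartesian product, the off-diagonal entry of $L(H)$ at $((u,w),(v,w'))$ is $-w_a$ when $w=w'$ and $a=(u,v)\in A$, is $-w_{a'}$ when $u=v$ and $a'=(w,w')\in A'$, and is $0$ otherwise. The diagonal entry at $(u,w)$ is the sum of weights of outgoing arcs, which splits cleanly as $\sum_{v}w_{u,v}^G+\sum_{w'}w_{w,w'}^{G'}$, i.e.\ the $u$-diagonal of $L(G)$ plus the $w$-diagonal of $L(G')$. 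This is exactly the entry pattern of the Kronecker sum $L(G)\otimes I_q+I_p\otimes L(G')$, so $L(H)=L(G)\otimes I_q+I_p\otimes L(G')$.

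Next I would invoke the eigenvalue-additivity property of Kronecker sums. Since $L(G)$ and $L(G')$ need not be diagonalizable (the digraphs may be asymmetric), it is cleanest to use Schur triangularization: there exist unitary $U$ and $V$ (over the splitting field, or over $\mathbf{C}$ after specializing the weights to numerical values) such that $U^*L(G)U=T$ and $V^*L(G')V=T'$ are upper triangular with diagonals $(\la_1,\ldots,\la_p)$ and $(\la_1',\ldots,\la_q')$ respectively. Then $(U\otimes V)^*L(H)(U\otimes V)=T\otimes I_q+I_p\otimes T'$ is again upper triangular (in the lexicographic ordering of the index set $[p]\times[q]$), and its diagonal entries are exactly $\la_i+\la_j'$ for $(i,j)\in[p]\times[q]$. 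Reading off the spectrum from this triangularization gives the lemma.

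The main obstacle—really the only subtle point—is to justify this argument with weights that are formal indeterminates rather than complex numbers, since Proposition~\ref{prop:Laplacian} is stated in that generality. Two ways around this: (i) specialize to numerical weights, prove the eigenvalue statement, and note that the identity of characteristic polynomials $\det(tI-L(H))=\prod_{i,j}(t-\la_i-\la_j')$ is a polynomial identity in the weights, hence holds formally; or (ii) work directly in the splitting field mentioned in the statement, where Schur triangularization still applies. Either route confirms that the eigenvalues of $L(H)$ (with multiplicity) are precisely the $\la_i+\la_j'$. Combining this with \eqref{eq:matrix-tree}, which identifies the roots of $F_G(t)$ with the negatives of the eigenvalues of $L(G)$, completes the proof of Proposition~\ref{prop:Laplacian}.
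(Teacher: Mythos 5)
Your proposal is correct and follows essentially the same route as the paper: establish the Kronecker-sum identity $L(H)=L(G)\otimes \Id_q+\Id_p\otimes L(G')$, simultaneously triangularize the two factors over the splitting field, and read off the diagonal of the resulting upper-triangular Kronecker sum. Your extra care about formal indeterminates (specialize-then-lift, or triangularize over the splitting field directly) matches the paper's parenthetical remark that $P,Q$ are taken with entries in the splitting field, so there is nothing to add.
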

\begin{proof}[Sketch of proof of Lemma \ref{lem:eigenvalues-add-up}] The Laplacians of $G$, $G'$ and $H$ are related by
$$L(H)=L(G)\otimes \Id_{q}+\Id_p\otimes L(G'),$$
where ``$\otimes$'' represents the  \emph{Kronecker product} of matrices. 
Moreover, if $M,N$ are any matrices of dimension $p\times p$ and $q\times q$ respectively, with eigenvalues  $\la_1,\ldots,\la_p$ and $\la_1',\ldots,\la_q'$, then the eigenvalues of the matrix $L=M\otimes \Id_q+\Id_p\otimes N$  are $(\la_i+\la'_j)_{i\in[p],j\in[q]}$. 
Indeed, there exists invertible matrices $P,Q$ (with entries in the splitting field of the polynomial $\det(M+t\cdot\Id_p)\det(N+t\cdot\Id_q)$) such that the matrices $M':= P^{-1}MP$ and $N':= Q^{-1}NQ$ are both upper triangular with diagonal elements $\la_1,\ldots,\la_p$ and $\la_1',\ldots,\la_q'$ respectively. And it is easily seen that $$(P\otimes Q)^{-1}\cdot\left(M\otimes \Id_q+\Id_p\otimes N\right)\cdot(P\otimes Q)=M'\otimes \Id_p+\Id_q\otimes N'$$
is a upper triangular matrix with diagonal elements $(\la_i+\la'_j)_{i\in[p],j\in[q]}$.  
This completes the proof of Lemma \ref{lem:eigenvalues-add-up} and Proposition \ref{prop:Laplacian}.
\end{proof}

\section{Additional remarks and conjectures}\label{sec:conclusion}
In this section we first give a formula for the enumerator of the spanning trees of the hypercube rooted at a given vertex, and explain its relation with a formula by Martin and Reiner~\cite{Martin-Reiner:spanning-cube}. Then we mention a consequence of Theorem~\ref{thm:bunkbed} and conjecture a generalization of this theorem.\\ 

\noindent \textbf{Unrooted spanning trees of the hypercube and relation with~\cite[Theorem 3]{Martin-Reiner:spanning-cube}}.\\
For a vertex $v=(v_1,v_2,\ldots,v_n)\in\{0,1\}^n$ of the hypercube we denote by $\mT_v$ the set of spanning trees of $C_n$ rooted at the vertex $v$, and we denote
$$T_{C_n,v}(\xx)=\sum_{T\in\mT_v}\,\prod_{a\in T}x_{\dir(a),\spin(a)}.$$ 
Observe that if $u=(u_1,\ldots,u_n)\in\{0,1\}^n$ is another vertex of $C_n$, then 
$$T_{C_n,u}(\xx)=\left(\prod_{i=1}^n\frac{x_{i,u_i}}{x_{i,v_i}}\right) T_{C_n,v}(\xx),$$
since changing the root of a spanning tree from $v$ to $u$ changes the number of arcs of direction $i$ and spin $1$ (resp. 0) by $u_i-v_i$ (resp. $v_i-u_i$). Combining this relation with
$$\sum_{v\in c_n}T_{C_n,v}(\xx)= [t]F_{C_n}(t;\xx)=\prod_{S\subseteq [n],S\neq\emptyset}\,\sum_{i\in S}(x_{i,0}+x_{i,1}),$$
gives 
\begin{equation}\label{eq:rooted-at-v}
T_{C_n,v}(\xx)=\left(\prod_{i=1}^nx_{i,v_i}\right)\times\left(\prod_{S\subseteq [n],|S|\geq 2}\,\sum_{i\in S}(x_{i,0}+x_{i,1})\right).
\end{equation}
We now establish the equivalence of \eqref{eq:rooted-at-v} with~\cite[Theorem 3]{Martin-Reiner:spanning-cube}. For $i\in[n]$, $\eps\in\{0,1\}$ and $T$ an unrooted spanning tree of $C_n$, we denote by $\deg_{i,\eps}(T)$ the sum of the degrees in $T$ of all the vertices of $C_n$ having their $i$th coordinate equal to $\eps$. We then consider 
$$S_{C_n}(\qq,\yy)=\sum_{T}\left(\prod_{e\in T}q_{\dir(e)}\right)\times\left(\prod_{i=1}^ny_{i,0}^{\deg_{i,0}(T)}y_{i,1}^{\deg_{i,1}(T)}\right),$$
where the sum is over the unrooted spanning trees of $C_n$. Let $T$ be an unrooted spanning tree of $C_n$ and let $T'$ be the rooted tree obtained by choosing $\rho=(0,0,\ldots,0)$ as the root vertex. It is easy to see that  for all $i\in[n],\eps\in\{0,1\}$
$$\deg_{i,\eps}(T)=2^n-2\cdot \textbf{1}_{\eps=0}+n_{i,\eps}(T')-n_{i,1-\eps}(T'),$$ 
where $n_{i,\eps}(T')$ is the number of arcs of $T'$ with direction $i$ and spin $\eps$. Therefore 
$$S_{C_n}(\qq,\yy)=\left(\prod_{i=1}^ny_{i,0}^{2^n}\,y_{i,1}^{2^n}\right)\times \frac{T_{C_n,\rho}(\xx)}{\prod_{i=1}^ny_{i,0}^2},$$
with $x_{i,\eps}=q_i\, y_{i,\eps}/y_{i,1-\eps}$. Using~\eqref{eq:rooted-at-v} then gives the following result obtained by Martin and Reiner in~\cite[Theorem 3]{Martin-Reiner:spanning-cube} using a matrix-tree approach:
$$S_{C_n}(\qq,\yy)=\left(\prod_{i=1}^nq_i\,y_{i,0}^{2^n-1}\,y_{i,1}^{2^n-1}\right)\times\left(\prod_{S\subseteq [n],|S|\geq 2}\,\sum_{i\in S}q_i\left(\frac{y_{i,0}}{y_{i,1}}+\frac{y_{i,1}}{y_{i,0}}\right)\right).
$$

\noindent \textbf{A consequence of Theorem~\ref{thm:bunkbed} about bicolored Cayley trees.}\\
The consequences of Theorem~\ref{thm:bunkbed} explored in this paper are mainly about Cartesian products of graphs. 
Let us mention, for fun, a consequence with a different flavor. Let $K_{p,p}$ be the complete bipartite graph with black vertices labeled $1,\ldots,p$ and white vertices labeled $1',\ldots,p'$. Let $m<p$ and let $R_m$ be the set of rooted spanning trees of $K_{p,p}$ containing the edge $\{i,i'\}$ for all $i\in[m]$. Observe that the complete bipartite graph $K_{p,p}$ is obtained from the strong product $K_p\boxtimes K_2$ by erasing all the straight arcs. Accordingly, we say that the \emph{spin} of the edge $\{i,i'\}$ is 0 if it is oriented toward the black endpoint $i$, and is 1 otherwise. Then, a consequence of Theorem~\ref{thm:bunkbed} is that for a uniformly random rooted tree in $R_m$ the spins of the edges $\{i,i'\}$ are independent and uniformly distributed. We do not know of an elementary proof of this fact.\\

\noindent \textbf{Conjecture for the spins of forests for Cartesian products of complete graphs.}\\
Just as formula~\eqref{eq:spanning-cube-2} was suggestive of the independence property for the spins of a random forest of the hypercube, formula~\eqref{eq:product-complete} and Proposition~\ref{prop:Laplacian} suggest an independence property that we make explicit now. 
Let $G=(U,A)$ be a weighted digraph, let $K_p$ be the complete graph with vertex set $[p]$, and let $H=G\times K_p$ be their Cartesian product. For $u\in U$ and $(i,j)\in[p]$ we call the arc of $H$ going from $(u,i)$ to $(u,j)$ a \emph{vertical arc of spin $j$ at vertex $u$}. We say that two rooted forests of $H$ have the same $G$-projection if they have the same number of $a$-arcs for all $a\in A$ and they have the same number of vertical arcs at $u$ for all $u\in U$. The \emph{multispin} of a rooted forest $F$ of $H$ at a vertex $u\in U$ is the multiset of the spins of the vertical arcs of $F$ at $U$. We now conjecture an analogue of Theorem~\ref{thm:bunkbed}:
\begin{conj}\label{conj:multispin-indep}
Let $F_0$ be a rooted forest of $H=G\times K_p$, and let $F$ be a uniformly random rooted forest of $H$ conditioned to have the same $G$-projection as $F_0$. Then the multispins $(\si_u)_{u\in U}$ of the random forest $F$ at the different vertices of $G$ are independent. 
\end{conj}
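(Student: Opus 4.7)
The plan is to generalize the inductive proof of Theorem~\ref{thm:bunkbed} from $K_2$ to $K_p$, proceeding by induction on $n=|U|$. The base case $n=1$ is trivial since there is only one multispin. For the inductive step, the strategy is to find a vertex $u^*\in U$ whose copies $\{(u^*,i)\}_{i\in[p]}$ admit a local ``reduction'' to a smaller Cartesian product $H'=G'\times K_p$, where $G'$ is obtained from $G$ by deletion or contraction.

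The first step is an arc-counting argument to locate such a $u^*$. Since $F_0$ is a rooted forest of $H$ (which has $pn$ vertices), $|F_0|\leq pn-1$, and this decomposes as $\alpha+\sum_u\beta_u$, where $\alpha$ counts non-vertical arcs and $\beta_u\leq p-1$ is the number of vertical arcs at $u$ (the vertical arcs at $u$ necessarily form a rooted subforest of $K_p$). From this bound one expects to find $u^*\in U$ satisfying either (a) no non-vertical arc of $F_0$ is directed toward any copy $(u^*,i)$, or (b) only a controlled number of non-vertical arcs of $F_0$ are incident to copies of $u^*$, permitting a contraction. In case (a), let $G'=G\setminus u^*$ and define $\phi:\mF_0\to\mF_0'$ by deleting the copies of $u^*$ and all incident arcs; in case (b), contract appropriate non-vertical arcs at copies of $u^*$, in analogy with case (b) of the original proof. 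Applying the induction hypothesis to $\phi(F)$ would give independence of $(\sigma_u)_{u\neq u^*}$, while uniformity within each fiber of $\phi$ would give independence of $\sigma_{u^*}$ from the rest, closing the induction.

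The main obstacle, absent when $p=2$, is the richer combinatorial structure of vertical arcs at a single vertex of $G$. For $K_2$ the multispin is a single bit and the fiber analysis is essentially automatic; for $K_p$ with $p>2$, the vertical arcs at $u^*$ form a rooted subforest of $K_p$ with $\beta_{u^*}$ arcs, and the multispin $\sigma_{u^*}$ is the multiset of their targets in $[p]$. Showing that within each fiber of $\phi$ all multispins compatible with the exterior data (namely, which copies $(u^*,i)$ are forced to be root vertices or to carry a non-vertical outgoing arc) occur equally often seems to require a refined enumeration of rooted forests of $K_p$ by a target-multiset statistic, perhaps a multivariate extension of Cayley's formula or a Pr\"ufer-type bijection. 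A secondary difficulty is that case (b) splits into many more subcases than in the $K_2$ proof, since merging two non-vertical arcs at distinct copies of $u^*$ depends on their $[p]$-spins rather than merely $\{0,1\}$-spins, and the bookkeeping for the $G'$-projection of the contracted arc grows accordingly. Overcoming these two issues appears to be the crux of the conjecture; a possible alternative route would be to weight each vertical arc $(u,i)\to(u,j)$ by a variable $y_{u,j}$ and try to show, via a Laplacian/matrix-tree computation extending Proposition~\ref{prop:Laplacian}, that the conditional joint generating function of the multispins factorizes over $u\in U$.
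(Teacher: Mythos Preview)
This statement is labeled a \emph{Conjecture} in the paper, and the paper offers no proof; it is explicitly presented as open. So there is no argument in the paper to compare your proposal against, and your write-up is not a proof either: you yourself flag that the fiber analysis in case~(a) and the contraction in case~(b) are unresolved. That honesty is appropriate, but it means the proposal is a research plan rather than a proof.

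Two concrete issues are worth naming. First, the counting step does not deliver the dichotomy you need. From $|F_0|<pn$ one gets $\sum_u\gamma_u<\sum_u(p-\beta_u)$, where $\gamma_u$ is the number of non-vertical arcs directed toward copies of $u$; this only yields some $u^*$ with $\gamma_{u^*}\le p-1-\beta_{u^*}$, which is case~(a) (i.e.\ $\gamma_{u^*}=0$) only when $\beta_{u^*}=p-1$. For general $p$ you are therefore almost always in a version of case~(b) with up to $p-1$ incoming non-vertical arcs, and the clean two-case split of the $K_2$ proof evaporates. Second, and more seriously, the paper points out immediately after the conjecture that the stronger statement---independence of the vertical \emph{subforests} $(F_u)_{u\in U}$ rather than merely the multispins---is \emph{false}, already for $H=K_3\times K_3$. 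Any correct argument must therefore distinguish the multispin from the full vertical subforest; in particular, a fiber analysis in which ``all vertical forests at $u^*$ compatible with the exterior data occur equally often'' cannot be the mechanism in general, since that would prove the false stronger statement. Your case~(a) sketch, and the hoped-for Pr\"ufer-type enumeration, are aimed at exactly that stronger uniformity, so even if the bookkeeping were carried out it would not close the induction once incoming arcs are present. This is the genuine obstruction, and it is presumably why the statement remains a conjecture.
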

Observe that Conjecture~\ref{conj:multispin-indep} would readily implies Corollary~\ref{cor:product-complete} (in the same way as Theorem~\ref{thm:bunkbed} implied~\eqref{eq:spanning-cube-2}). It should also be mentioned that a stronger conjecture is false: it is not true that the subforests $(F_u)_{u\in U}$ made of the vertical arcs of the random forest $F$ at the different vertices of $G$ are independent (indeed, one can find a counterexample for $H=K_3\times K_3$).\\



\noindent \textbf{Acknowledgements.} I owe many thanks to Prasad Tetali for presenting to me the problem of finding a combinatorial proof of~\eqref{eq:spanning-cube}, and for pointing out that~\eqref{eq:no1} had a neat combinatorial interpretation. I also thank Victor Reiner and Richard Stanley for stimulating discussions, and Glenn Hurlbert and Bojan Mohar for useful references.

\bibliographystyle{plain} 
\bibliography{biblio-spanning-cube}


\end{document}